\renewcommand{\@seccntformat}[1]
{\csname the#1\endcsname.\enspace} \makeatother
\def \Xint#1{\mathchoice
   {\XXint\displaystyle\textstyle{#1}}%
   {\XXint\textstyle\scriptstyle{#1}}%
   {\XXint\scriptstyle\scriptscriptstyle{#1}}%
   {\XXint\scriptscriptstyle\scriptscriptstyle{#1}}%
   \!\int}
\def \XXint#1#2#3{{\setbox0=\hbox{$#1{#2#3}{\int}$}
     \vcenter{\hbox{$#2#3$}}\kern-.5\wd0}}
\def \dashint{\Xint-}
\newtheorem{theorem}{Theorem}
\newtheorem{lemma}{Lemma}
\newtheorem{remark}{Remark}
\newtheorem{example}{Example}
\newtheorem{definition}{Definition}
\begin{document}
\begin{center}
   {\bf On the distribution of extrema for a class of L\'evy processes\footnote{\today}}\\
{\sc Amir T. Payandeh Najafabadi$^{a,}$\footnote{Corresponding
author:
amirtpayandeh@sbu.ac.ir; Phone No. +98-21-29903011; Fax No. +98-21-22431649} \& Dan Kucerovsky$^{b}$}\\
a  Department of Mathematical Sciences, Shahid Beheshti
University, G.C. Evin, 1983963113, Tehran, Iran.\\
b Department of Mathematics and Statistics, University of New
Brunswick, Fredericton, N.B. CANADA E3B 5A3.
\end{center}
\begin{center}
    {\sc Abstract}
\end{center}
Suppose $X_t$ is either a \emph{regular exponential type} L\'evy
process  or a L\'evy process with a \emph{bounded variation jumps
measure}. The distribution of the extrema of $X_t$ play a crucial role in many financial
and actuarial problems. This article employs the well known and
powerful Riemann-Hilbert technique to derive the
characteristic functions of the extrema for such L\'evy processes. An
approximation technique along with several examples is given.\\
\textbf{\emph{Keywords:}} Principal value integral; H\"older
condition; Pad\'e approximant; continued fraction; Fourier
transform; Hilbert transform.\\
2010 Mathematics Subject Classification: 30E25, 11A55, 42A38,
60G51, 60j50, 60E10. \normalsize
\section{Introduction}
Suppose $X_t$ be a one-dimensional, real-valued, right
continuous with left limits (c\`adl\`ag), and adapted L\'evy process, starting at zero. Suppose also that the corresponding jumps measure, $\nu,$
is defined on ${\Bbb R}\setminus\{0\}$ and satisfies $\int_{\Bbb
R}\min\{1,x^2\}\nu(dx)<\infty.$ Moreover, suppose the stopping time
$\tau(q)$ is either a geometric or an exponential distribution
with parameter $q$ that is independent of the L\'evy process $X_t$, and that
$\tau(0)=\infty.$ The extrema of the L\'evy process $X_t$ are defined to be
\begin{eqnarray}
\label{definition-extrema}
\nonumber  M_q &=& \sup\{X_s:~s\leq\tau(q)\};\\
           I_q &=& \inf\{X_s:~s\leq\tau(q)\}.
\end{eqnarray}
The Wiener-Hopf factorization method is a technique that can be
used to study the characteristic function of $M_q$ and $I_q.$ The
Wiener-Hopf method has been used to show that:
\begin{description}
    \item[(i)] The random variables $M_q$ and $I_q$ are independent
(Kuznetsov; 2009b and Kypriano; 2006 Theorem 6.16);
    \item[(ii)] The product of their characteristic
functions is equal to the characteristic function of the L\'evy
process $X_t$ (Bertoin; 1996 page 165); and
    \item[(iii)] The random
variable $M_q$ ($I_q$) is infinitely divisible, positive
(negative), and has zero drift (Bertoin; 1996, page 165).
\end{description}
Supposing that the characteristic function of a L\'evy process,
$X_t,$  can be decomposed as a product of two functions, one of
which is the boundary values of a of a function that is  analytic
and bounded in the  complex upper half-plane (i.e., ${\Bbb
C}^+=\{\lambda:~\lambda\in{\Bbb C}~\hbox{and}~Im(\lambda)\geq0\}$)
and the other of which is the boundary values of a function that
is analytic and bounded in the complex lower half-plane
(i.e.,${\Bbb C}^-=\{\lambda:~\lambda\in{\Bbb
C}~\hbox{and}~Im(\lambda)\leq0\}$), we then have that the
characteristic functions of $M_q$ and $I_q$
 can be determined explicitly. The required decomposition can be obtained explicitly if, for example,
 the characteristic function of the L\'evy process is a rational function. Furthermore,
 there is a very general existence result for such decompositions, based on the theory
 of singular integrals (specifically Sokhotskyi-Plemelj integrals). Lewis \&
Mordecki (2005) considered a L\'evy process $X_t$ which has
negative jumps distributed according to a mixed-gamma family of
distributions and has an arbitrary positive jumps measure. They
established that such a process has a characteristic function
which can be decomposed as a product of a rational function and a
more or less arbitrary function, and that these functions are
analytic in ${\Bbb C}^+$ and ${\Bbb C}^-,$ respectively. Recently,
they provided an analogous result for a L\'evy process whose
positive jumps measure is given by a mixed-gamma family of
distributions and whose negative jumps measure has an arbitrary
distribution, more detail can be found in Lewis \& Mordecki
(2008).

Unfortunately, in the majority of situations, the characteristic function
of the process is not  a rational function nor can
be explicitly decomposed as a product of two analytic functions in ${\Bbb
C}^+$ and ${\Bbb C}^-.$ Of course, there is a general theory allowing the
characteristic functions of $M_q$ and $I_q$ to be expressed in terms of a
Sokhotskyi-Plemelj integral (see Equation \ref{Plemelj-integral}).
This provides an existence result, but presents some difficulties in
numerical work due to
slow evaluation and numerical problems caused by singularities in the complex plane that are
near the contour used in the integral. To overcome these problems,
approximation methods may be  considered.

Roughly speaking, the Wiener-Hopf factorization technique attempts
to find a function $\Phi$ that is analytic, bounded, and
complex-valued except for a prescribed jump discontinuity on the
real line within the complex plane. The radial limits at the real
line, denoted $\Phi^\pm,$ satisfy
$\Phi^+(\omega)\Phi^-(\omega)=g(\omega),$ where $\omega\in{\Bbb
R}$ and $g$ is a given function with certain conditions ($g$ is a
zero index function which satisfies the H\"older condition). The
radial limits provide the desired decomposition of $g$ into a
product of boundary value functions that was alluded to above. The
Wiener-Hopf factorization technique can be extended to a more
general setting and is then also known as the Riemann-Hilbert
method. The Riemann-Hilbert method is theoretically well developed
and it is often more convenient to work with than the Wiener-Hopf
technique, see Kucerovsky \& Payandeh (2009) for more detail. The
Riemann-Hilbert problem has proved remarkably useful in solving an
enormous variety of model problems in a wide range of branches of
physics, mathematics, and engineering. Kucerovsky, et al. (2009)
employed the Riemann-Hilbert problem to solve a statistical
decision problem. More precisely, using the Riemann-Hilbert
problem, they established the mle estimator under absolute-error
loss function is a generalized Bayes estimator for a wide class of
location family of distributions.

This article considers the problem of finding the distributions of the extrema of a
L\'evy process whose ({\bf i}) \emph{either} its corresponding
jumps measure is a finite variation measure \emph{or} is the
regular exponential L\'evy type process.; and ({\bf ii}) its
corresponding stopping time $\tau(q)$ is either a geometric or an
exponential distribution with parameter $q$ independent of the
L\'evy process $X_t$ where $\tau(0)=\infty.$ Then, it develops a
procedure in terms of the well known and  powerful
Riemann-Hilbert technique, to solve the problem of finding the characteristic functions of $M_q$
and $I_q$. A remark has been made that is helpful in the
situation where such  characteristic functions cannot be found
explicitly.

Section 2 collects some essential elements which are required for
other sections. Section 3 states the problem of finding the
 characteristic functions for the distribution of the extrema in terms of a Riemann-Hilbert
problem. Then, in that section is derived an expression for such characteristic
functions in terms of the Sokhotskyi-Plemelj integral. A remark that is helpful in
situations where such characteristic functions cannot be found
explicitly is made, and several examples are given.
\section{Preliminaries}
Now, we collect some lemmas which are used later.

The index of an analytic function $h$ on ${\Bbb R}$ is the number
of zero minus number of poles of $h$ on ${\Bbb R},$ see Payandeh
(2007, chapter 1), for more technical detail. Computing the index
of a function is usually a \emph{key step} to determine the
existence and number of solutions of a Riemann-Hilbert problem. We
are primarily interested in the case of zero index.

The \emph{Sokhotskyi-Plemelj integral} of a function $s$ which
satisfies the H\"older condition and it is defined by a principal
value integral, as follows.
\begin{eqnarray}
\label{Plemelj-integral}\phi_s(\lambda ):=\frac 1{2\pi
i}\dashint_{ {\Bbb R}}\frac {s(x)}{x -\lambda}dx ,~~\hbox{for}~
\lambda\in {\Bbb C}.\end{eqnarray} The following are some well
known properties of the Sokhotskyi-Plemelj integral, proofs can be
found in Ablowitz \& Fokas (1990, chapter 7), Gakhov (1990,
chapter 2), and Pandey (1996, chapter 4), among others.
\begin{lemma}
\label{Sokhotskyi-Plemelj-properties} The radial limit of the
Sokhotskyi-Plemelj integral of $s,$   given by
$\phi^{\pm}_s(\omega )=\displaystyle\lim_{\lambda\rightarrow
\omega +i0^{\pm}}\phi_s(\lambda )$ can be represented as:
\begin{description}
    \item[i)] jump formula. i.e., $\phi^{\pm}_s(\omega )=\pm s(\omega
)/2+\phi_s(\omega ),$ where $\omega\in {\Bbb R};$
    \item[ii)] $\phi^{\pm}_s(\omega )=\pm s(\omega )/2+H_s(\omega )/(2i),$ where
$H_s(\omega)$ is the \emph{Hilbert transform} of $s$ and
$\omega\in {\Bbb R}.$
\end{description}
\end{lemma}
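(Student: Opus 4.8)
The statement is the classical Sokhotski--Plemelj theorem, so the plan is to establish part (i) directly and then obtain part (ii) as an immediate consequence of (i) together with the definition of the Hilbert transform. The work is concentrated in (i), which I would prove by computing the radial limit of $\phi_s$ explicitly.

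Fix $\omega\in\R$ and take $\lambda=\omega+i\varepsilon$ with $\varepsilon>0$; the approach from $\mathbb{C}^-$ is entirely parallel with one sign reversed. I would split the Cauchy kernel into its real and imaginary parts,
\[
\frac{1}{x-\omega-i\varepsilon}=\frac{x-\omega}{(x-\omega)^2+\varepsilon^2}+i\,\frac{\varepsilon}{(x-\omega)^2+\varepsilon^2},
\]
so that $\phi_s(\omega+i\varepsilon)$ becomes $\tfrac{1}{2\pi i}\int_{\R}\frac{(x-\omega)\,s(x)}{(x-\omega)^2+\varepsilon^2}\,dx$ plus $\tfrac{1}{2\pi}\int_{\R}\frac{\varepsilon\,s(x)}{(x-\omega)^2+\varepsilon^2}\,dx$. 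As $\varepsilon\to0^{+}$ the first integral should converge to the principal value integral $\phi_s(\omega)$, since $(x-\omega)/((x-\omega)^2+\varepsilon^2)$ is a standard regularization of $1/(x-\omega)$; the second is the convolution of $s$ with the Poisson kernel $\varepsilon/(\pi((x-\omega)^2+\varepsilon^2))$, an approximate identity of total mass one, so it should tend to $\tfrac{1}{2}s(\omega)$. Together these give $\phi^{+}_s(\omega)=\tfrac{1}{2}s(\omega)+\phi_s(\omega)$; the lower limit replaces $+\tfrac{1}{2}$ by $-\tfrac{1}{2}$ because the imaginary part of the kernel changes sign, which is exactly (i).

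The main obstacle is making both limits rigorous near the singularity $x=\omega$, and this is where the H\"older hypothesis on $s$ enters. I would use the usual subtraction trick: on a fixed neighbourhood $|x-\omega|<\delta$ write $s(x)=(s(x)-s(\omega))+s(\omega)$. For the $s(x)-s(\omega)$ part, the bound $|s(x)-s(\omega)|\le C|x-\omega|^{\alpha}$ together with $|x-\omega|/((x-\omega)^2+\varepsilon^2)\le 1/|x-\omega|$ and $\varepsilon/((x-\omega)^2+\varepsilon^2)\le 1/|x-\omega|$ furnishes an $\varepsilon$-independent integrable dominating function $C|x-\omega|^{\alpha-1}$, so dominated convergence applies on the local piece; on $|x-\omega|\ge\delta$ the integrands are continuous in $\varepsilon$ down to $\varepsilon=0$, and one must also invoke the decay (or H\"older-at-infinity) behaviour of $s$ that makes $\phi_s$ well defined in order to control the tails. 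For the constant part $s(\omega)$, the two integrals are explicit: $\int_{\R}(x-\omega)/((x-\omega)^2+\varepsilon^2)\,dx$ has principal value $0$ by oddness, matching the $\varepsilon=0$ principal value, while $\tfrac{1}{\pi}\int_{\R}\varepsilon/((x-\omega)^2+\varepsilon^2)\,dx=1$ with the mass concentrating at $\omega$, which produces precisely the $\tfrac{1}{2}s(\omega)$ jump; approaching $\omega$ non-tangentially rather than vertically is handled the same way.

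Finally, part (ii) is formal: recalling that the Hilbert transform is $H_s(\omega)=\tfrac{1}{\pi}\dashint_{\R}\frac{s(x)}{x-\omega}\,dx$ in the sign convention of the cited sources, the definition of $\phi_s$ gives $\phi_s(\omega)=\tfrac{1}{2\pi i}\dashint_{\R}\frac{s(x)}{x-\omega}\,dx=\tfrac{1}{2i}H_s(\omega)$, and substituting into the jump formula of (i) yields $\phi^{\pm}_s(\omega)=\pm\tfrac{1}{2}s(\omega)+\tfrac{1}{2i}H_s(\omega)$. As all of this is standard, I would present the computation compactly and refer to Ablowitz \& Fokas (1990, ch.~7), Gakhov (1990, ch.~2), and Pandey (1996, ch.~4) for the complete details.
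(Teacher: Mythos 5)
Your proposal is correct: it is the standard Sokhotski--Plemelj argument (split the Cauchy kernel into a conjugate-Poisson part converging to the principal value and a Poisson part acting as an approximate identity of mass one, with the H\"older condition controlling the singularity at $x=\omega$), and part (ii) then follows from the convention $H_s(\omega)=\tfrac{1}{\pi}\dashint_{\Bbb R}\tfrac{s(x)}{x-\omega}\,dx$, which gives $\phi_s(\omega)=H_s(\omega)/(2i)$. The paper itself offers no proof of this lemma, deferring entirely to Ablowitz \& Fokas, Gakhov, and Pandey; your argument is precisely the one found in those references, so there is no divergence of approach to report.
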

The Riemann-Hilbert problem is the function-theoretical problem of
finding a single function which is analytic separately in ${\Bbb
C}^+$ and ${\Bbb C}^-$ (called sectionally analytic) and having a
prescribed jump discontinuity on the real line. The following
states the homogeneous Riemann-Hilbert problem which one deals
with in studying the characteristic functions of $M_q$ and $I_q$.
\begin{definition}
\label{Riemann-Hilbert-problem} The homogeneous Riemann-Hilbert
problem, with zero index,  is the problem of  finding a sectionally analytic function
$\Phi$ whose the upper and lower radial limits at the real line,
$\Phi^{\pm},$ satisfy
\begin{equation}
\label{equation-of-Riemann-Hilbert-problem} \Phi^{
+}(\omega)=g(\omega )\Phi^{-}(\omega),~~\hbox{for}~w \in {\Bbb
R},\end{equation} where  $g$ is a given continuous function
  satisfying a H\"older condition on ${\Bbb R}.$ Moreover, $g$
is assumed to have zero index, to be non-vanishing on ${\Bbb R},$
and bounded above by 1.
\end{definition}
A homogeneous Riemann-Hilbert problem always has a family of
solutions if no restrictions on growth at infinity are posed. But
a unique solution can be obtained with further restrictions.
Solutions vanishing at infinity are the most common restriction
considered in mathematical physics and in engineering
applications, see Payandeh (2007, chapter 1), for more detail.
With these restrictions, the solutions of the homogeneous
Riemann-Hilbert problem are given by
\begin{eqnarray*}
  \Phi^{\pm}(\lambda ) &=&\exp\{\pm\phi_{\ln (g)}(\lambda)\},~\hbox{for~}\lambda\in {\Bbb C}
\end{eqnarray*}
where $\phi_{\ln (g)}$ stands for the Sokhotskyi-Plemelj integral,
given by \ref{Plemelj-integral}, of $\ln (g).$

In this paper, we need to solve a homogeneous Riemann-Hilbert
problem (also  known as a Wiener-Hopf factorization problem) with
\begin{eqnarray}
\label{RH-For-this-paper}
  \Phi^+(\omega)\Phi^-(\omega) &=& g(\omega),~\omega\in{\Bbb R},
\end{eqnarray}
where $g$ is a given, zero index function which satisfies the
H\"older condition and $g(0)=1.$ For convenience in presentation,
 we will simply call the above homogeneous Riemann-Hilbert
problem a Riemann-Hilbert problem. The following provides
solutions for the above Riemann-Hilbert problem. We begin with what we term the Resolvent Equation for  Sokhotskyi-Plemelj integrals.
\begin{lemma} The {\it Sokhotskyi-Plemelj} integral of a function
$f$ satisfies
$$\phi_f (\lambda) - \phi_f (\mu) = (\lambda-\mu)\phi_{\frac{f(x)}{x-\lambda}}(\mu),$$
for $\lambda$ and $\mu$ real or complex. \label{lem:resolvent}
\end{lemma}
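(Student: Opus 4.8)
The plan is to reduce the claimed identity to the elementary partial-fraction identity
$$\frac{1}{x-\lambda}-\frac{1}{x-\mu}=\frac{\lambda-\mu}{(x-\lambda)(x-\mu)},$$
and then to check that this algebraic manipulation survives passage to the principal value integral. First I would write, straight from the definition (\ref{Plemelj-integral}),
$$\phi_f(\lambda)-\phi_f(\mu)=\frac{1}{2\pi i}\dashint_{{\Bbb R}}f(x)\left(\frac{1}{x-\lambda}-\frac{1}{x-\mu}\right)dx,$$
substitute the partial-fraction identity, pull the constant $\lambda-\mu$ outside the integral, and recognize what remains as $(\lambda-\mu)\,\phi_{f(x)/(x-\lambda)}(\mu)$, since the Sokhotskyi-Plemelj integral of $x\mapsto f(x)/(x-\lambda)$ evaluated at $\mu$ is by definition $\frac{1}{2\pi i}\dashint_{{\Bbb R}}\frac{f(x)}{(x-\lambda)(x-\mu)}dx$.

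The only real content is justifying that the two principal value integrals may be combined. When $\lambda$ and $\mu$ are both non-real there is no singularity on ${\Bbb R}$, the two kernels are each $O(1/|x|)$ while their difference is $O(1/|x|^2)$, so everything in sight is genuinely (absolutely) integrable against the H\"older function $f$ and ordinary linearity of the integral finishes the argument. When $\mu$ (or $\lambda$) lies on ${\Bbb R}$, I would split the contour into a symmetric neighbourhood $(\mu-\varepsilon,\mu+\varepsilon)$ of the singularity and its complement: on the complement ordinary linearity applies, and on the symmetric interval the H\"older continuity of $f$ guarantees that the principal value of each of the two terms converges individually, so the principal value of the sum is the sum of the principal values; letting $\varepsilon\downarrow0$ yields the identity. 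One point worth flagging explicitly is that $\phi_{f(x)/(x-\lambda)}(\mu)$ is to be read as the Sokhotskyi-Plemelj integral of $x\mapsto f(x)/(x-\lambda)$, which satisfies the H\"older condition on ${\Bbb R}$ precisely when $\lambda\notin{\Bbb R}$ (or when $\lambda\in{\Bbb R}$ and $f(\lambda)=0$); outside those cases the right-hand side must be interpreted as the displayed principal value integral with its double pole understood in the principal-value sense.

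The step I expect to be the main obstacle is exactly this bookkeeping with principal values when one or both of $\lambda,\mu$ is real — in particular verifying that adding the two simple-pole principal values at a common point produces no spurious "finite part" correction, and confirming that the improved $O(1/|x|^2)$ decay of the combined kernel removes any delicate behaviour at infinity. Once that is settled the lemma is immediate; and it is worth noting that, letting $\mu\to\lambda$, the identity formally specializes to the differentiation formula $\phi_f'(\lambda)=\phi_{f(x)/(x-\lambda)}(\lambda)$, which is the form we shall actually invoke later.
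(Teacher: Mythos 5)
Your proof is correct and takes essentially the same route as the paper: both arguments rest on the partial-fraction identity $\frac{1}{x-\lambda}-\frac{1}{x-\mu}=\frac{\lambda-\mu}{(x-\lambda)(x-\mu)}$ followed by linearity of the integral, the paper proving the identity first for $\lambda,\mu\notin{\Bbb R}$ as an equation of ordinary Cauchy integrals and then extending to the real line by averaging the radial limits $\phi_f^{\pm}$ via the jump formula, whereas you verify the principal-value bookkeeping directly by splitting off symmetric neighbourhoods of the real singularities; your explicit caveat about how $\phi_{f(x)/(x-\lambda)}(\mu)$ must be read when $\lambda\in{\Bbb R}$ is a point the paper leaves implicit. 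One small correction to your closing remark: the form actually used later in the paper is not the $\mu\to\lambda$ derivative limit but the specialization $\lambda\to 0$ (giving $\phi_{\ln g}(0)-\phi_{\ln g}(\mu)=-\mu\,\phi_{\ln g(x)/x}(\mu)$), though this does not affect the validity of your proof of the lemma itself.
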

\begin{proof} In general, $$(x-\lambda)^{-1} - (x-\mu)^{-1} = (\lambda-\mu)(x-\mu)^{-1}(x-\lambda)^{-1}.$$
Then, see Dunford \& Schwartz (1988), we have an equation of
Cauchy integrals, where $\Gamma=\Bbb R$:
$$\frac{1}{2\pi i} \int_\Gamma \frac{f(x)}{x-\lambda}dx - \frac{1}{2\pi i} \int_\Gamma \frac{f(x)}{x-\mu}dx = \frac{\lambda-\mu}{2\pi i} \int_\Gamma \frac{f(x)}{(x-\mu)(x-\lambda)}dx .$$
\end{proof}
The above is valid only for $\lambda$ and $\mu$ not on the real
line. However, by Lemma \ref{Plemelj-integral} the values of
$\phi_f$ on the real line are obtained by averaging the limit from
above, $\phi^+_f$, and the limit from below, $\phi^{-}_f.$ We thus
obtain the stated equation in all cases.
\begin{lemma}
\label{Solution-RH-For-Our-Paper} Suppose $\Phi^{\pm}$ are
sectionally analytic functions satisfying the Riemann-Hilbert problem
given by \ref{RH-For-this-paper}. Moveover, suppose that $g$ is a
zero index function satisfies the H\"older condition and $g(0)=1.$
Then,
\begin{eqnarray*}
\Phi^\pm(\lambda)&=&\exp \{\pm\phi_{\ln g}(\lambda)\mp\phi_{\ln
g}(0)\},~\lambda\in{\Bbb C}
\end{eqnarray*}
where $\phi_{\ln g}$ stands for the Sokhotskyi-Plemelj integration
of $\ln g.$
\end{lemma}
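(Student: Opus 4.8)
The plan is to proceed in two stages: first exhibit the stated functions as a factorization of $g$, and then argue that, under the standing growth/normalization restrictions, they are the factorization. Since $g$ is non-vanishing and of zero index on $\mathbb{R}$, a single-valued continuous branch of $\ln g$ exists and inherits the H\"older condition from $g$, so $\phi_{\ln g}$ is a well-defined Sokhotskyi--Plemelj integral, sectionally analytic in $\mathbb{C}\setminus\mathbb{R}$. Setting $\Phi_0^{\pm}(\lambda):=\exp\{\pm\phi_{\ln g}(\lambda)\}$ on $\mathbb{C}^{\pm}$ gives sectionally analytic functions, and taking radial limits at $\omega\in\mathbb{R}$ the common summand $\phi_{\ln g}(\omega)$ cancels in the product, so by the jump formula of Lemma \ref{Sokhotskyi-Plemelj-properties}(i),
$$\Phi_0^{+}(\omega)\Phi_0^{-}(\omega)=\exp\{\phi^{+}_{\ln g}(\omega)-\phi^{-}_{\ln g}(\omega)\}=\exp\{\ln g(\omega)\}=g(\omega).$$
Multiplying $\Phi_0^{+}$ by the constant $e^{-\phi_{\ln g}(0)}$ and $\Phi_0^{-}$ by its reciprocal leaves the product unchanged, so $\Phi^{\pm}(\lambda):=\exp\{\pm\phi_{\ln g}(\lambda)\mp\phi_{\ln g}(0)\}$ again solves \ref{RH-For-this-paper}; here $\phi_{\ln g}(0)$ is the principal-value integral, i.e. $\tfrac12(\phi^{+}_{\ln g}(0)+\phi^{-}_{\ln g}(0))$ as noted after Lemma \ref{lem:resolvent}. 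Using $g(0)=1$, hence $\ln g(0)=0$, the jump formula yields $\phi^{\pm}_{\ln g}(0)=\phi_{\ln g}(0)$, whence $\Phi^{+}(0)=\Phi^{-}(0)=1$. This $\Phi^{\pm}$ is the factorization analogue of the solution of the homogeneous Riemann--Hilbert problem recalled in Section~2, renormalized to be $1$ at the origin.

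Next I would show this is the only solution consistent with the restrictions in force, namely sectional analyticity with $\Phi^{\pm}$ bounded and bounded away from zero up to $\mathbb{R}$, together with the normalization at $0$. Given any such solution $\Phi^{\pm}$, the identity $\Phi^{+}\Phi^{-}=g=\Phi_0^{+}\Phi_0^{-}$ on $\mathbb{R}$ rearranges to $\Phi^{+}/\Phi_0^{+}=\Phi_0^{-}/\Phi^{-}$ there; since $\phi_{\ln g}$ is bounded on each half-plane, $\Phi_0^{\pm}$ is bounded and bounded away from zero, so the left side is analytic and bounded in $\mathbb{C}^{+}$, the right side analytic and bounded in $\mathbb{C}^{-}$, and as they agree continuously on $\mathbb{R}$ they patch to a bounded entire function, hence to a nonzero constant $c$ by Liouville. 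Thus $\Phi^{+}=c\,\Phi_0^{+}$ and $\Phi^{-}=c^{-1}\Phi_0^{-}$. Evaluating at $\lambda=0$ with $\Phi_0^{+}(0)=e^{\phi^{+}_{\ln g}(0)}=e^{\phi_{\ln g}(0)}$ and imposing $\Phi^{+}(0)=1$ forces $c=e^{-\phi_{\ln g}(0)}$, which is precisely the constant used above; the condition $\Phi^{-}(0)=1$ gives the same $c$, and it is exactly the hypothesis $g(0)=1$ that makes these two requirements compatible. Substituting back produces the claimed formula.

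The only delicate points are bookkeeping rather than conceptual: (a) verifying that $\ln g$ is H\"older with a single-valued branch, which is where the zero-index hypothesis enters; (b) handling the radial limits at the on-contour point $\lambda=0$ carefully through the jump formula, so that the principal-value meaning of $\phi_{\ln g}(0)$ is respected; and (c) invoking the correct boundedness restriction so that the Liouville step is legitimate. I would also remark, applying the Resolvent Equation of Lemma \ref{lem:resolvent} with $\mu=0$, that
$$\phi_{\ln g}(\lambda)-\phi_{\ln g}(0)=\lambda\,\phi_{\frac{\ln g(x)}{x-\lambda}}(0),$$
which rewrites the exponent as a single Sokhotskyi--Plemelj integral and makes the normalization $\Phi^{\pm}(0)=1$ manifest; this is the form most convenient for the examples in Section~3. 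I expect step (b), the evaluation at the on-contour point $0$, to be the main thing to get right.
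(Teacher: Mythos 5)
Your proposal is correct, and it reaches the stated formula by a genuinely different route than the paper. The paper passes to logarithms, observes that $\ln g(0)=0$ violates the non-vanishing condition of the standard additive problem, applies Gakhov's device of dividing the boundary relation by $\omega$, solves that modified jump problem to get $\Phi^\pm(\lambda)=\exp\{\pm\lambda\,\phi_{\ln g(x)/x}(\lambda)\}$, and then uses the Resolvent Equation of Lemma \ref{lem:resolvent} (with $\mu$ the free variable and $\lambda\to 0$) to rewrite that exponent as $\phi_{\ln g}(\lambda)-\phi_{\ln g}(0)$. You instead verify directly, via the jump formula of Lemma \ref{Sokhotskyi-Plemelj-properties}, that $\exp\{\pm\phi_{\ln g}\}$ already factorizes $g$, then renormalize by the constant $e^{\mp\phi_{\ln g}(0)}$ so that $\Phi^\pm(0)=1$, and finally supply a Liouville argument showing that, under boundedness and non-vanishing up to the boundary, the factorization is unique up to a multiplicative constant which the normalization at $0$ pins down. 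Your closing remark, obtained from Lemma \ref{lem:resolvent} with $\mu=0$, recovers exactly the paper's intermediate expression $\exp\{\pm\lambda\,\phi_{\ln g(x)/x}(\lambda)\}$, so the two arguments are fully reconciled. What your route buys is a uniqueness proof that the paper only gestures at (it asserts the normalized solution without isolating the growth restrictions that force it), and it avoids the somewhat delicate division by $\omega$ at an on-contour zero; what it costs is that the boundedness and bounded-away-from-zero hypotheses on $\Phi^\pm$ and on $\phi_{\ln g}$, which you correctly flag as the input to Liouville, must be made explicit, since the lemma as stated leaves them implicit in the standing conventions of Section~2. Both arguments depend on the same two facts --- single-valuedness of $\ln g$ from the zero-index hypothesis, and careful handling of the radial limits at $\lambda=0$ using $\ln g(0)=0$ --- and you identify both correctly.
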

\textbf{Proof.} By taking logarithm from  both sides, the above
equation can be rewritten as
\begin{eqnarray*}
  \ln \Phi^+(\omega)-(-\ln \Phi^-(\omega)) &=& \ln g(\omega).
\end{eqnarray*}
Since $\ln g(0)=0,$ the above  equation does not satisfy the
non-vanishing condition  of the standard Riemann-Hilbert problem.
One may handle this  by dividing both sides by $\omega$ (Gakhov;
1990) suggested this kind of modification to extend the domain of
the Riemann-Hilbert method). Now, we have
\begin{eqnarray*}
  \frac{\ln \Phi^+(\omega)}{\omega}-\frac{(-\ln \Phi^-(\omega))}{\omega} &=& \frac{\ln
  g(\omega)}{\omega}.
\end{eqnarray*}
 The above equation
meets all conditions for the usual solution of the additive Riemann-Hilbert problem by
Sokhotskyi-Plemelj integrals, and therefore, the
solutions of our Riemann-Hilbert problem (equation \ref{RH-For-this-paper}) are
\begin{eqnarray*}
  \Phi^\pm(\lambda) &=& \exp\{\pm\frac{\lambda}{2\pi i}\dashint_{ {\Bbb R}}\frac {\ln g(x)/x}{x
  -\lambda}dx\},~\lambda\in{\Bbb C}.
\end{eqnarray*}
Lemma \ref{lem:resolvent} with $f=\ln g$ gives
 $$\phi_{\ln g} (\lambda) - \phi_{\ln g} (\mu) = (\lambda-\mu)\phi_{\frac{\ln g(x)}{x-\lambda}}(\mu).$$
Letting $\lambda$ go to zero from above, in the complex plane, and using the fact that $\ln g(0) = 0$,  Lemma \ref{Plemelj-integral} lets us conclude that
 $$\phi_{\ln g} (0) - \phi_{\ln g} (\mu) = -\mu\phi_{\frac{\ln g(x)}{x}}(\mu).$$
 Substituting this into the above equation for $\Phi^\pm$ gives our claimed
 result.~$\square$

The following explores some properties of the above lemma.
\begin{remark}
\label{solutions-of-RH-in-term-g} Using the jump formula. One can
conclude that
\begin{eqnarray*}
\Phi^\pm(\omega)&=&\sqrt{g(\omega)}\exp \{\pm\frac{i}{2}(H_{\ln
g}(0)-H_{\ln g}(\omega))\},
\end{eqnarray*}
where $H_{\ln g}$ stands for the Hilbert transform of $\ln g.$
\end{remark}
The following explores Carlemann's technique for obtaining solutions of
the Riemann-Hilbert problem \ref{RH-For-this-paper} directly
rather than using the Sokhotskyi-Plemelj integrations.
\begin{remark} (Carlemann's technique)
\label{Carlemann-method} If $g$ can be decomposed as a product of
two sectionally analytic  functions $g^+$ and $g^-,$ respectively
in ${\Bbb C}^+$ and ${\Bbb C}^-.$ Then, solutions of the
Riemann-Hilbert problem \ref{RH-For-this-paper} are $\Phi^+\equiv
g^+$ and $\Phi^-\equiv g^-.$
\end{remark}
Carlemann's method amounts to solution by inspection.
The most favorable situation for the Carlemann's method is the
case where $g$ is a rational function. In the case that approximation
solutions are required, Kucerovsky \& Payandeh (2009) suggested approximating $g$ with a rational function obtained from a Pad\'e
approximant or a continued fraction expansion.

The Paley-Wiener theorem is one of the key elements for restating
problem of finding the characteristic functions of extrema in a
L\'evy process as a Riemann-Hilbert problem, as in equation
(\ref{RH-For-this-paper}). The theorem is stated below, a proof
may be found in Dym \& Mckean (1972, page 158).
\begin{theorem} (Paley-Wiener)
\label{Paley.Wiener} Suppose $s$ is a function in $L^2({\Bbb R}),$
then the following are equivalent:
\begin{description}
     \item[i)] The real-valued function $s$ vanishes on the left half-line.
     \item[ii)] The Fourier transform $s$, say, $\hat {s}$ is holomorphic on $
{\Bbb C}^{+}$ and the $L^2({\Bbb R})$-norms of the functions
$x\mapsto\hat {s}(x+iy_0)$ are uniformly bounded for all $y_0\geq
0.$
\end{description}
\end{theorem}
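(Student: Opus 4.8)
I would prove the two implications separately, fixing the normalisation $\hat s(\zeta)=\int_{\mathbb{R}}s(t)e^{i\zeta t}\,dt$ (the sign convention under which support of $s$ on $[0,\infty)$ forces analyticity in the \emph{upper} half-plane; the opposite sign yields the mirror statement). Throughout, Plancherel's theorem is the quantitative engine. For \emph{(i)}$\Rightarrow$\emph{(ii)}: if $s$ vanishes on $(-\infty,0)$ then for $\zeta=x+iy$ with $y>0$ one has $|s(t)e^{i\zeta t}|=|s(t)|e^{-yt}$ on $t\ge 0$ and $\int_0^\infty|s(t)|e^{-yt}\,dt\le\|s\|_2\,\|e^{-y\,\cdot}\|_{L^2(0,\infty)}<\infty$ by Cauchy--Schwarz, so $\hat s$ is given by an absolutely convergent integral on the open upper half-plane; holomorphy there follows from Morera's theorem together with Fubini (the integral of $\hat s$ around any triangle vanishes), or by differentiating under the integral sign. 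Since $\hat s(\,\cdot\,+iy)=\widehat{(s\,e^{-y\,\cdot})}$, Plancherel gives
\[
\|\hat s(\,\cdot\,+iy)\|_2^2=2\pi\int_0^\infty|s(t)|^2e^{-2yt}\,dt\le 2\pi\|s\|_2^2\qquad(y\ge 0),
\]
which is the desired uniform bound. This direction is routine.

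For \emph{(ii)}$\Rightarrow$\emph{(i)}, let $G_y\in L^2(\mathbb{R})$ be the inverse Fourier transform of $x\mapsto\hat s(x+iy)$ for $y>0$; by Plancherel and hypothesis $\sup_{y>0}\|G_y\|_2=:C<\infty$. The structural step is to show $e^{yt}G_y(t)$ is (a.e.) independent of $y$. I would obtain this by applying Cauchy's theorem to $\zeta\mapsto\hat s(\zeta)e^{-i\zeta t}$ on the rectangle with corners $\pm R+ia$ and $\pm R+ib$ for $0<a<b$: the contour integral is $0$, the horizontal sides converge (as $R\to\infty$ along a suitable sequence) to $2\pi e^{at}G_a(t)$ and $-2\pi e^{bt}G_b(t)$, while the vertical sides tend to $0$ along a sequence $R_n\to\infty$ chosen using only that $\hat s$ is $L^2$ on horizontal lines — indeed $\int_a^b\|\hat s(\,\cdot\,+iy)\|_2^2\,dy\le(b-a)C^2<\infty$, so $y\mapsto|\hat s(\pm R_n+iy)|$ is small in $L^2(a,b)$ along some $R_n\to\infty$, and a Cauchy--Schwarz bound on the vertical integral finishes it. Hence $e^{at}G_a=e^{bt}G_b$ a.e., i.e.\ $G_y(t)=e^{-yt}g(t)$ for a single function $g=G_0$.

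Now $C^2\ge\|G_y\|_2^2=\tfrac1{2\pi}\int_{\mathbb{R}}|g(t)|^2e^{-2yt}\,dt$ for every $y>0$; letting $y\to+\infty$ and applying monotone convergence on $(-\infty,0)$, where $e^{-2yt}\to\infty$, forces $g=0$ a.e.\ on $(-\infty,0)$, so each $G_y$ vanishes on the negative half-line. It remains to pass to the boundary: as $y\to 0^+$ one has $\hat s(\,\cdot\,+iy)\to\hat s$ in $L^2(\mathbb{R})$ — this is where ``$\hat s$ holomorphic on $\mathbb{C}^+$'' must be read as membership in the Hardy space $H^2$ of the half-plane, or else one extracts a weak-$L^2$ limit of $\hat s(\,\cdot\,+iy_n)$ and identifies it with the prescribed boundary function — whence $G_y\to s$ in $L^2$ and $s$ vanishes on $(-\infty,0)$.

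\textbf{Main obstacle.} The reverse implication is the delicate one, and within it the two technical points are: (a) the contour shift, where the vertical segments cannot be controlled by pointwise decay (since $\hat s$ is only $L^2$, not $L^1$, on horizontal lines) and instead require an $L^2$-averaging argument over the strip; and (b) identifying the $L^2$ boundary value of the analytic extension with the given function $\hat s$. Once these are settled, everything else is bookkeeping with Plancherel's theorem.
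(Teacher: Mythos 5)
The paper does not actually prove this theorem: it is quoted as a known result with a citation to Dym \& McKean (1972, p.~158), so there is no in-paper argument to compare against. Your proposal is the classical Paley--Wiener argument (essentially the one in Dym--McKean and in Rudin's \emph{Real and Complex Analysis}, Thm.~19.2) and it is correct: the forward direction via Cauchy--Schwarz, Morera/Fubini and Plancherel is routine as you say, and in the reverse direction you correctly identify and handle the two genuinely delicate points --- controlling the vertical sides of the rectangle by an $L^2$-average over the strip (choosing $R_n\to\infty$ along which the vertical contributions vanish, since pointwise decay is unavailable), and identifying the $L^2$ boundary value of the analytic extension with the given $\hat s$, which does require reading the hypothesis in the Hardy-space sense or extracting a weak limit, since the theorem as stated in the paper is loose on this point ($\mathbb{C}^+$ is even taken closed there). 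No gaps.
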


\begin{definition} (Mixed gamma family of distributions)
\label{mixed gamma} A nonnegative random variable $X$ is said to
be distributed according to a mixed gamma distribution if its
density function is given by
\begin{eqnarray}
\label{mixed gamma-density}
  p(x) &=&
  \sum_{k=1}^{\nu}\sum_{j=1}^{n_\nu}c_{kj}\frac{\alpha_k^jx^{j-1}}{(j-1)!}e^{-\alpha_k
  x},~x\geq0,
\end{eqnarray}
where $c_{k_j}$ and $\alpha_k$ are positive value where
$\sum_{k=1}^{\nu}\sum_{j=1}^{n_\nu}c_{k_j}=1.$
\end{definition}
The following explores some properties of the characteristic
function of the above, a proof can be found in Bracewell (2000,
page 433), and Lewis \& Mordecki (2005), among others.
\begin{lemma}
\label{properties-characteristic-function} The characteristic
function of a distribution (or equivalently the Fourier transform
of its density function), say ${\hat p},$ has the following
properties:
\begin{description}
    \item[i)] ${\hat p}$ is a rational function if and only if the density function belongs to the class of mixed gamma
    family of distributions, given by \ref{mixed gamma-density};
    \item[ii)] ${\hat p}(\omega)$ is a Hermition function, i.e., the real
    part of ${\hat p}$ is even function and the imaginary part odd function;
    \item[iii)] ${\hat p}(0)=1;$ and the norm of ${\hat p}(\omega)$ bounded by
    1.
\end{description}
\end{lemma}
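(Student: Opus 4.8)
The plan is to dispatch parts (ii) and (iii) first, since they follow at once from the representation $\hat p(\omega)=\int_{\mathbb{R}}e^{i\omega x}p(x)\,dx$, and then to put all the effort into (i). For (ii): because $p$ is real-valued, $\overline{\hat p(\omega)}=\int_{\mathbb{R}}e^{-i\omega x}p(x)\,dx=\hat p(-\omega)$; separating into real and imaginary parts gives $\operatorname{Re}\hat p(\omega)=\int_{\mathbb{R}}\cos(\omega x)\,p(x)\,dx$, which is even in $\omega$, and $\operatorname{Im}\hat p(\omega)=\int_{\mathbb{R}}\sin(\omega x)\,p(x)\,dx$, which is odd in $\omega$ --- that is, the Hermitian symmetry asserted in (ii). For (iii): $\hat p(0)=\int_{\mathbb{R}}p(x)\,dx=1$, and since $|e^{i\omega x}|=1$ and $p\ge 0$ we get $|\hat p(\omega)|\le\int_{\mathbb{R}}|p(x)|\,dx=1$, so $\|\hat p\|_{\infty}\le 1$.

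For the ``if'' direction of (i) I would compute the characteristic function of the elementary Erlang density $x\mapsto\frac{\alpha^{j}x^{j-1}}{(j-1)!}e^{-\alpha x}\mathbf{1}_{\{x\ge 0\}}$: an elementary gamma integral gives $\int_{0}^{\infty}e^{i\omega x}\frac{\alpha^{j}x^{j-1}}{(j-1)!}e^{-\alpha x}\,dx=\bigl(\frac{\alpha}{\alpha-i\omega}\bigr)^{j}$, which is rational in $\omega$ with a single pole, of order $j$, at $\omega=-i\alpha$. By linearity, the characteristic function of a density of the form \ref{mixed gamma-density} is the finite sum $\hat p(\omega)=\sum_{k=1}^{\nu}\sum_{j=1}^{n_\nu}c_{kj}\bigl(\frac{\alpha_k}{\alpha_k-i\omega}\bigr)^{j}$, manifestly a rational function of $\omega$.

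For the ``only if'' direction I would argue structurally. By (iii) the rational function $\hat p$ is bounded on $\mathbb{R}$, and by the Riemann--Lebesgue lemma $\hat p(\omega)\to 0$ as $|\omega|\to\infty$; a rational function with these two properties is proper (denominator degree strictly larger than numerator degree) with no poles on $\mathbb{R}$, hence lies in $L^{2}(\mathbb{R})$, so $p\in L^{2}(\mathbb{R})$ by Plancherel. Since the underlying random variable is nonnegative, $p$ vanishes on the left half-line, so the Paley--Wiener theorem (Theorem \ref{Paley.Wiener}) shows $\hat p$ extends holomorphically to the upper half-plane with uniformly bounded horizontal $L^{2}$ norms; consequently the rational function $\hat p$ has no poles in the closed upper half-plane, that is, every pole $\beta_k$ satisfies $\operatorname{Im}\beta_k<0$. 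Expanding in partial fractions, $\hat p(\omega)=\sum_{k}\sum_{\ell=1}^{m_k}a_{k\ell}(\omega-\beta_k)^{-\ell}$, and inverting the Fourier transform term by term --- closing the contour in the lower half-plane for $x>0$, which picks up the residue at $\beta_k$, and in the upper half-plane for $x<0$, which yields $0$ --- gives $p(x)=\mathbf{1}_{\{x\ge 0\}}\sum_{k}\sum_{\ell=1}^{m_k}b_{k\ell}\,x^{\ell-1}e^{-\alpha_k x}$ with $\alpha_k:=i\beta_k$ and $\operatorname{Re}\alpha_k>0$. Finally, since $p$ is real, the poles occur in pairs symmetric about the imaginary axis (by the symmetry in (ii)); grouping these conjugate pairs makes each contribution real, and relabelling casts $p$ into the form \ref{mixed gamma-density}, with $\sum_{k,j}c_{kj}=1$ forced by $\hat p(0)=1$.

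The step I expect to be the main obstacle is the last one of the ``only if'' direction: matching the pole orders $\ell$ with the exponents $j$ in \ref{mixed gamma-density}, and --- more delicately --- verifying that after the grouping the reconstructed coefficients really do furnish a density of mixed gamma type with admissible positive parameters, rather than merely an arbitrary real combination of Erlang-type terms. This is precisely where the hypothesis that $p$ is a genuine probability density must be brought to bear, and it is the point worked out in detail in the sources cited alongside the lemma (Lewis \& Mordecki 2005; Bracewell 2000). Parts (ii) and (iii), by contrast, are entirely routine.
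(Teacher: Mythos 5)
The paper does not actually prove this lemma --- it delegates everything to Bracewell (2000, p.~433) and Lewis \& Mordecki (2005) --- so your write-up supplies an argument where the paper has none. Parts (ii) and (iii) and the ``if'' half of (i) are correct and complete as you give them, and your route through Paley--Wiener plus partial fractions for the ``only if'' half is the standard (and the right) one: boundedness plus Riemann--Lebesgue forces $\hat p$ to be proper with no real poles, nonnegativity of the support pushes all poles into the open lower half-plane, and contour inversion produces $p(x)=\mathbf{1}_{\{x\ge 0\}}\sum_{k,\ell}b_{k\ell}x^{\ell-1}e^{-\alpha_k x}$ with $\operatorname{Re}\alpha_k>0$. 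Up to that point there is nothing to object to.

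The obstacle you flag at the end, however, is not merely a delicate step you are deferring to the literature --- it cannot be closed under the paper's literal Definition \ref{mixed gamma}, which demands that the $c_{kj}$ and $\alpha_k$ be \emph{positive} (hence real). Take $p(x)=2e^{-x}(1-\cos x)$ for $x\ge 0$: this is a genuine probability density, its characteristic function $\frac{2}{1-i\omega}-\frac{1}{1+i-i\omega}-\frac{1}{1-i-i\omega}$ is rational, yet the exponents $1\pm i$ are not positive reals and no regrouping produces positive coefficients on real-exponent Erlang terms. So the ``only if'' direction is false for Definition \ref{mixed gamma} as written and true only under the Lewis--Mordecki convention in which the $\alpha_k$ are complex with positive real part and the $c_{kj}$ are complex subject to conjugate pairing --- a convention the paper itself tacitly adopts in Example \ref{from-paper-lewis-mordecki-positive-jumps} via the ordering $0<\alpha_1<\operatorname{Re}(\alpha_2)\le\cdots\le\operatorname{Re}(\alpha_v)$. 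Your instinct that the last step is where the difficulty lives is exactly right; the honest resolution is to either restate the lemma with the complex-parameter class or accept that the equivalence in (i) holds only for that enlarged class. With that amendment, your argument is complete; no positivity of the reconstructed coefficients needs to be (or can be) extracted beyond conjugate symmetry and $\sum c_{kj}=1$.
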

\section{Main results}
Suppose that $X_t$ is a one-dimensional real-valued L\'evy process
starting at $X_0=0$ and defined by a triple $(\mu,\sigma,\nu):$
the drift $\mu\in{\Bbb R},$ volatility $\sigma\geq0,$ and the
jumps measure $\nu$ is given by a nonnegative function defined on
${\Bbb R}\setminus\{0\}$ satisfying $\int_{\Bbb
R}\min\{1,x^2\}\nu(dx)<\infty.$ The L\'evy-Khintchine representation
states that the characteristic exponent $\psi$ (i.e.,
$\psi(\omega)=\ln (E(\exp(i\omega X_1))),~\omega\in{\Bbb R}$) can
be represented by
\begin{eqnarray}
\label{Levy-Khintchine} \psi(\omega) &=&
i\mu\omega-\frac{1}{2}\sigma^2\omega^2+\int_{{\Bbb R}}(e^{i\omega
x}-1-i\omega xI_{[-1,1]}(x))\nu(dx),~~\omega\in{\Bbb R}.
\end{eqnarray}
Now, we explore some properties of the two expressions
$q(q-\psi(\omega))^{-1}$ and $(1-q)(1-q\psi(\omega))^{-1},$
$\omega\in{\Bbb R}$ that will play an essential r\^ole in the rest of
this section.
\begin{lemma}
\label{Holder-condition} The L\'evy process $X_t$ has a jumps
measure $\nu$ that satisfies $\int_{{\Bbb
R}\setminus[-1,1]}|x|^\varepsilon v(x)<\infty,$ for some
$\varepsilon\in(0,1).$ Then
\begin{description}
    \item[i)] $q(q-\psi(\omega))^{-1}$ satisfies the H\"older
    condition;
    \item[ii)] $(1-q)(1-q\exp\{-\psi(\omega)\})^{-1}$ satisfies
the H\"older condition.
\end{description}
\end{lemma}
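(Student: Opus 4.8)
The plan is to view both expressions as $F\circ\psi$ for a fixed, explicitly known function $F$ that is analytic off a small exceptional set, and to reduce the Hölder property of the composition to the Hölder property of $\psi$ itself. The underlying elementary fact is: if $u\colon\mathbb R\to\mathbb C$ satisfies a Hölder condition of some order $\varepsilon\in(0,1)$ on a set and $F$ is Lipschitz on a neighbourhood of the image of that set, then $F\circ u$ again satisfies a Hölder condition of order $\varepsilon$ there. Since for (i) $F(z)=q(q-z)^{-1}$ and for (ii) $F(z)=(1-q)(1-qe^{-z})^{-1}$, the proof splits into three tasks: (a) show $\psi$ is Hölder of order $\varepsilon$; (b) locate the image $\psi(\mathbb R)$ and check that $F$ is Lipschitz on a neighbourhood of it; (c) handle the behaviour at infinity.

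For (a) I would split the Lévy--Khintchine integrand at $|x|=1$. The part $i\mu\omega-\tfrac12\sigma^2\omega^2+\int_{|x|\le 1}(e^{i\omega x}-1-i\omega x)\,\nu(dx)$ is of class $C^1$ in $\omega$: differentiation under the integral sign is legitimate because $|ix(e^{i\omega x}-1)|\le|\omega|x^2$ and $\int_{|x|\le1}x^2\,\nu(dx)<\infty$ by the standing Lévy hypothesis; hence this part is locally Lipschitz, a fortiori locally Hölder-$\varepsilon$. For the large-jump part $\int_{|x|>1}(e^{i\omega x}-1)\,\nu(dx)$ I would use the inequality $|e^{i\theta}-1|\le 2^{1-\varepsilon}|\theta|^{\varepsilon}$, obtained by interpolating the two bounds $|e^{i\theta}-1|\le 2$ and $|e^{i\theta}-1|\le|\theta|$; it gives $|e^{i\omega_1x}-e^{i\omega_2x}|\le 2^{1-\varepsilon}|x|^{\varepsilon}|\omega_1-\omega_2|^{\varepsilon}$, so this part is globally Hölder-$\varepsilon$ with constant $2^{1-\varepsilon}\int_{|x|>1}|x|^{\varepsilon}\nu(dx)$, finite precisely by the hypothesis $\int_{\mathbb R\setminus[-1,1]}|x|^\varepsilon\nu(dx)<\infty$. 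Adding the two pieces shows $\psi$ is Hölder-$\varepsilon$ on every bounded interval (with a constant that grows with the interval, because of the $\sigma^2\omega^2$ term).

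For (b), the Lévy--Khintchine formula gives $\operatorname{Re}\psi(\omega)=-\tfrac12\sigma^2\omega^2+\int(\cos\omega x-1)\,\nu(dx)\le 0$, so $\psi(\mathbb R)$ lies in the closed left half-plane. In case (i) this yields $|q-\psi(\omega)|\ge q-\operatorname{Re}\psi(\omega)\ge q>0$, hence $|F'(z)|=q|q-z|^{-2}\le q^{-1}$ throughout $\{\operatorname{Re}z\le0\}$; so $F$ is Lipschitz on that half-plane, $F\circ\psi$ is Hölder-$\varepsilon$, and it is automatically bounded by $1$ with $F(\psi(0))=1$, matching the standing requirements on $g$ in \eqref{RH-For-this-paper}. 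In case (ii), $F(z)=(1-q)(1-qe^{-z})^{-1}$ is analytic except at the points of $\ln q+2\pi i\mathbb Z$, which lie on the line $\operatorname{Re}z=\ln q<0$; provided $\psi(\omega)$ stays a positive distance from that discrete set — equivalently, $1-qe^{-\psi(\omega)}$ is bounded away from $0$ on $\mathbb R$ — $F$ is Lipschitz on a neighbourhood of $\psi(\mathbb R)$ and the conclusion follows exactly as in (i).

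The main obstacle, and the point that deserves real care, is (c) together with the non-vanishing issue just flagged. The drift-plus-diffusion-plus-small-jump part of $\psi$ is only \emph{locally} Lipschitz, so to obtain a Hölder estimate valid uniformly on all of $\mathbb R$ — as is needed for a genuine Riemann--Hilbert datum, which must also be controlled at infinity — one has to exploit that $q(q-\psi(\omega))^{-1}\to0$ and its increments shrink whenever $|\psi(\omega)|\to\infty$, so that the unbounded growth of $\psi$ is cancelled by the decay of $F$; this is routine but must be spelled out. Likewise in (ii) one must verify that $\psi(\omega)$ never meets $\ln q+2\pi i\mathbb Z$, the delicate case being a bounded, real-valued characteristic exponent (for instance a lattice compound Poisson process), where the non-vanishing of $1-qe^{-\psi}$ has to be read off from the process's standing hypotheses — or from the fact that in the paper's setting this quantity is itself a characteristic function, hence of modulus $\le1$ by Lemma \ref{properties-characteristic-function}.
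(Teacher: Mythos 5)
Your proposal is correct in substance, but it is doing something quite different from what the paper does: the paper offers no argument at all for this lemma, simply citing Kuznetsov (2009a) for part (i) and declaring part (ii) a ``minor variation.'' You have in effect reconstructed the content of that citation. Your route --- writing both expressions as $F\circ\psi$, proving $\psi$ is H\"older-$\varepsilon$ by splitting the L\'evy--Khintchine integral at $|x|=1$ and using the interpolation bound $|e^{i\theta}-1|\le 2^{1-\varepsilon}|\theta|^{\varepsilon}$ (which is exactly where the hypothesis $\int_{{\Bbb R}\setminus[-1,1]}|x|^\varepsilon\nu(dx)<\infty$ enters), and then composing with a map $F$ that is Lipschitz on the closed left half-plane because $\mathrm{Re}\,\psi\le 0$ forces $|q-\psi|\ge q$ --- is the standard and correct argument, and it makes visible why the tail condition on $\nu$ is needed, which the paper's bare citation does not. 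The price is that you must (as you acknowledge) separately control the H\"older condition at infinity, since the drift/diffusion/small-jump part of $\psi$ is only locally Lipschitz; your observation that the quadratic growth of $|q-\psi|^{2}$ in the denominator of the increment of $F\circ\psi$ dominates the linear growth of the increments of $\psi$ does close this, but it is the one step you assert rather than carry out.

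One further point in your favour: the difficulty you flag in part (ii) is real and is \emph{not} resolved by the paper. With the formula as printed, $F(z)=(1-q)(1-qe^{-z})^{-1}$ has poles at $z\in\ln q+2\pi i{\Bbb Z}$, and $\psi({\Bbb R})$ can meet this set (for standard Brownian motion, $\psi(\omega)=-\omega^{2}/2$ passes through $\ln q$), so the expression as written is not even bounded on ${\Bbb R}$. The resolution is the one you hint at: the characteristic function of $X_{\tau(q)}$ for geometric $\tau(q)$ is $(1-q)\bigl(1-qe^{+\psi(\omega)}\bigr)^{-1}$, whose denominator satisfies $|1-qe^{\psi(\omega)}|\ge 1-q>0$ because $|e^{\psi(\omega)}|\le 1$; with that sign your argument for (ii) goes through verbatim. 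You should state this correction explicitly rather than leave it as a parenthetical, since without it part (ii) is false as literally stated.
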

\textbf{Proof.} A proof of part (i) may be found in Kuznetsov
(2009a) and the  proof of part (ii) is a minor variation of the
proof of part (i). $\square$

The above condition on the jumps measure $v,$ (\textit{i.e.},
$\exists\varepsilon\in(0,1),~\mbox{such that}~ \int_{{\Bbb
R}\setminus[-1,1]}|x|^\varepsilon v(x)<\infty$) is a very mild
restriction and many L\'evy processes, such as all stable
processes, meet this property. It only excludes cases which the
jumps measure has extremely heavy tail (behaves like
$|x|^{-1}/(\ln|x|)^2$ for large enough $x$), see Kuznetsov (2009a)
for more detail.

The following recalls the definition of a very useful class of
L\'evy processes.
\begin{definition}
\label{regular-exponential-type} A L\'evy process $X_t$ is said to
be of regular exponential type, if its corresponding characteristic
exponent is analytic and continuous in a strip about the real
line.
\end{definition}
Loosely speaking, a L\'evy process $X_t$ is a regular L\'evy
process of exponential type (RLPE) if its jumps measure has a
polynomial singularity at the origin and decays exponentially at
infinity, see Boyarchenko \& Levendorski\u{l} (1999, 2002a-c). The
majority of classes of L\'evy processes used in empirical studies
in financial markets satisfy the conditions given above (i.e., $\psi$
is analytic and continuous in a stripe about the real line).
Brownian motion, Kou's model (Kou, 2002); hyperbolic processes
(Eberlein \& Keller, 1995, Eberlein et al, 1998, and
Barndorff-Nielsen, et at, 2001); normal inverse gaussian processes
and their generalization (Barndorff-Nielsen, 1998 and
Barndorff-Nielsen \& Levendorski\u{l} 2001); extended Koponen's
family (Koponen, 1995 and Boyarchenko \& Levendorski\u{l}, 1999)
are examples of the regular L\'evy process of exponential type.
While the variance gamma processes (Madan et al, 1998) and stable
L\'evy processes are two important exceptions, see Cardi (2005)
for more detail.
\begin{lemma}
\label{psi-analytic-bounded} The L\'evy process $X_t$ has a analytic
and continuous characteristic exponent on the real line ${\Bbb R}$
\emph{either} one the following conditions are hold:
\begin{description}
    \item[i)] $X_t$ has a jumps measure $\nu(dx)$ with bounded variation  (\textit{i.e.}, $\int_{-1}^1x\nu(dx)<\infty$).
    \item[ii)] $X_t$ is a regular L\'evy process of exponential type (see Definition
    \ref{regular-exponential-type}).
\end{description}
\end{lemma}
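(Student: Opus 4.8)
The plan is to handle the two cases separately. Case (ii) is essentially a matter of unwinding a definition: if $X_t$ is a regular L\'evy process of exponential type then, by Definition \ref{regular-exponential-type}, its characteristic exponent $\psi$ is \emph{by hypothesis} analytic and continuous on an open strip $\{\,\omega\in\mathbb C:\ |\operatorname{Im}\omega|<\delta\,\}$ about the real axis, and restricting to $\mathbb R$ gives the assertion with nothing further to prove. All the real content is in case (i), which I would organize as (a) simplify the L\'evy--Khintchine formula, (b) prove continuity, (c) prove analyticity.

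\emph{Step (a).} The observation to exploit is that a bounded-variation jump measure lets one discard the truncation term in the L\'evy--Khintchine representation \eqref{Levy-Khintchine}. Indeed $\int_{[-1,1]}|x|\,\nu(dx)<\infty$ by the bounded-variation hypothesis, while $\nu(\mathbb R\setminus[-1,1])<\infty$ always holds because of the standing assumption $\int_{\mathbb R}\min\{1,x^2\}\,\nu(dx)<\infty$. Setting $\gamma:=\mu-\int_{[-1,1]}x\,\nu(dx)\in\mathbb R$ we may therefore rewrite
\begin{equation*}
\psi(\omega)=i\gamma\omega-\tfrac12\sigma^2\omega^2+\int_{\mathbb R}\bigl(e^{i\omega x}-1\bigr)\,\nu(dx),
\end{equation*}
where the integral now converges absolutely, since $|e^{i\omega x}-1|\le\min\{2,|\omega|\,|x|\}$ and $x\mapsto\min\{2,|x|\}$ is $\nu$-integrable. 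The polynomial part $i\gamma\omega-\tfrac12\sigma^2\omega^2$ is entire, so it suffices to analyze the integral term, call it $\chi(\omega)$.

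\emph{Steps (b)--(c).} Continuity of $\chi$ on $\mathbb R$ is immediate from dominated convergence, the dominating function being $\min\{2,C|x|\}$ on any bounded $\omega$-neighbourhood. For analyticity I would show that $\chi$ extends holomorphically to a complex neighbourhood of $\mathbb R$: there $|e^{i\omega x}-1|$ is still bounded by a fixed $\nu$-integrable function, using $\int_{[-1,1]}|x|\,\nu(dx)<\infty$ near the origin and, for the tail, the factor $e^{-(\operatorname{Im}\omega)x}$ together with the mild moment bound $\int_{\mathbb R\setminus[-1,1]}|x|^{\varepsilon}\,\nu(dx)<\infty$ inherited from Lemma \ref{Holder-condition}; this legitimizes differentiation under the integral sign (equivalently, Morera's theorem, i.e. vanishing of $\oint_{\partial T}\chi$ over small triangles $T$). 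Adding back the entire polynomial part then yields that $\psi$ is analytic and continuous on $\mathbb R$.

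The step I expect to be the main obstacle is precisely the complex-analytic extension of the tail integral $\int_{|x|>1}\bigl(e^{i\omega x}-1\bigr)\,\nu(dx)$: it requires genuine decay of $\nu$ at infinity so that $e^{-(\operatorname{Im}\omega)x}$ stays $\nu$-integrable on a strip, and controlling its $\omega$-derivatives (or the increments used in Morera's theorem) needs slightly more than bare finiteness of the tail mass. This is exactly where the tail-moment hypothesis carried over from Lemma \ref{Holder-condition} has to be brought in — and, if one is content with real-analyticity on $\mathbb R$, a Cauchy-type estimate on the increments $\chi(\omega+h)-\chi(\omega)$ must be invoked there; everything else reduces to routine dominated-convergence bookkeeping.
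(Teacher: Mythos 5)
Your part (ii) and your step (a) agree with the paper: the paper likewise absorbs the truncation term into the drift using $\int_{[-1,1]}|x|\,\nu(dx)<\infty$, and your continuity argument by dominated convergence is fine. The divergence, and the genuine gap, is in how you try to get analyticity. You propose to extend the whole jump integral holomorphically to a two\mbox{-}sided strip about $\mathbb R$, dominating $|e^{i\omega x}-1|$ there by a $\nu$-integrable function ``using the mild moment bound $\int_{|x|>1}|x|^{\varepsilon}\nu(dx)<\infty$.'' That bound is polynomial and gives no control over the factor $e^{|\operatorname{Im}\omega|\,|x|}$ that appears as soon as $\omega$ leaves the real axis, so the proposed domination fails on every strip of positive width. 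The failure is not repairable by a cleverer estimate: a compound Poisson process with jump density proportional to $x^{-2-\varepsilon}I_{\{x>1\}}$ has a finite (hence bounded-variation) jump measure satisfying the $\varepsilon$-moment condition, yet its exponent is not even twice differentiable at $\omega=0$ (a characteristic function analytic at the origin forces finite moments of all orders), so no holomorphic extension to a neighbourhood of $\mathbb R$ exists. Exponential decay of $\nu$ is precisely the extra hypothesis of case (ii); it is not available in case (i). The step you yourself flag as ``the main obstacle'' is therefore not an obstacle to be overcome but a step that is false as you have set it up.

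The paper takes a different, one\mbox{-}sided route that sidesteps this: after rewriting $\psi$, it splits the jump integral at the origin into $\int_{(-\infty,0]}e^{i\omega x}\nu^-(dx)$ and $\int_{(0,\infty)}e^{i\omega x}\nu^+(dx)$ and applies the Paley--Wiener theorem (Theorem \ref{Paley.Wiener}) to each piece separately, obtaining one function analytic and bounded in $\mathbb C^-$ and one in $\mathbb C^+$ (for $x>0$ and $\operatorname{Im}\omega\ge 0$ one has $|e^{i\omega x}|\le 1$, so no tail decay beyond finiteness of $\nu$ outside $[-1,1]$ is needed), and then reads off the conclusion on $\mathbb R=\mathbb C^-\cap\mathbb C^+$. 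What this actually delivers -- and what the Riemann--Hilbert machinery downstream uses -- is that $\psi$ is a sum of boundary values of functions sectionally analytic in the two half-planes and continuous on $\mathbb R$, not a two\mbox{-}sided holomorphic extension. To salvage your write-up you should adopt this split and weaken your claimed conclusion for the tail term accordingly, rather than attempt the strip extension.
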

\textbf{Proof.} For part (i), observe that the characteristic
exponent for the bounded variation jumps measure $\upsilon(dx)$ is
given by
\begin{eqnarray*}
  \psi(\omega) &=& i\mu\omega-\int_{{\Bbb R}}(e^{i\omega x}-1-i\omega
  xI_{[-1,1]}(x))\upsilon(dx)\\
  &=& i\mu\omega-1-i\omega\int_{[-1,1]}x\upsilon(dx)+\int_{(-\infty,0]}e^{i\omega
  x}\upsilon^-(dx)+\int_{(0,\infty)}e^{i\omega x}\upsilon^+(dx),
\end{eqnarray*}
see Bertoin (1996). From the fact that $\upsilon(dx)$ is a bounded
variation jumps measure, one can conclude that three first terms
are analytic on ${\Bbb R}.$ A double application of the
Paley-Wiener Theorem \ref{Paley.Wiener} shows that two last terms
are, respectively, analytic and bounded in ${\Bbb C}^-$ and ${\Bbb
C}^+.$ Therefore, these terms are analytic on ${\Bbb R}={\Bbb
C}^-\cap{\Bbb C}^+.$ The proof of part (ii) follows from Definition
\ref{regular-exponential-type}. $\square$

\begin{lemma}
\label{index-zero} Suppose the L\'evy process $X_t$ \emph{either}
is a regular exponential type \emph{or} has a bounded variation
jumps measure $\nu.$ Then,
\begin{description}
    \item[i)] letting the geometric stopping time be $\tau(q),$ with parameter $q$ ($q\neq1$), the function
    $(1-q)(1-q\exp\{-\psi(\omega)\})^{-1}$ has zero index on the real line;
    \item[ii)] for exponential stopping time $\tau(q)$ with constant rate $q$ ($q>0$), the function
    $(q)(q-\psi(\omega))^{-1}$ has zero index on the real line.
\end{description}
\end{lemma}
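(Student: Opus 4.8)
The key is that the *index* of a non-vanishing continuous function $h$ on $\mathbb{R}$ (together with the point at infinity, so that we regard $h$ as a function on the one-point compactification) equals the winding number of $h$ around the origin, i.e. $\frac{1}{2\pi}[\arg h]_{-\infty}^{+\infty}$. So in both cases I would compute a winding number. First I would record the relevant boundedness and normalization facts already available in the excerpt: by Lemma~\ref{Holder-condition} the functions $q(q-\psi(\omega))^{-1}$ and $(1-q)(1-q\exp\{-\psi(\omega)\})^{-1}$ satisfy a H\"older condition, hence are continuous; by Lemma~\ref{psi-analytic-bounded} the characteristic exponent $\psi$ is analytic and continuous on $\mathbb{R}$ under either hypothesis on $X_t$; and $\psi(0)=0$, while $\operatorname{Re}\psi(\omega)\le 0$ for all real $\omega$ (this is the standard fact that $|E e^{i\omega X_1}|\le 1$, equivalently $|e^{\psi(\omega)}|\le 1$, which also appears implicitly via Lemma~\ref{properties-characteristic-function}(iii) applied to $e^{\psi}$). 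These facts guarantee the denominators never vanish, so the index is well defined, and they pin down the behaviour at the relevant boundary points.

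For part (ii): I claim $q - \psi(\omega)$ is non-vanishing and has winding number zero. Non-vanishing is immediate since $\operatorname{Re}(q-\psi(\omega)) = q - \operatorname{Re}\psi(\omega) \ge q > 0$ for all $\omega\in\mathbb{R}$. But a curve that stays in the open right half-plane $\{\operatorname{Re} z>0\}$ cannot wind around the origin at all — the half-plane is simply connected and omits $0$, so $\arg(q-\psi(\omega))$ stays in $(-\pi/2,\pi/2)$ and its total variation over $\mathbb{R}$ is zero. Hence $q-\psi$ has index $0$; the constant numerator $q$ has index $0$; and the index of a quotient is the difference of indices, so $q(q-\psi(\omega))^{-1}$ has index $0$ on $\mathbb{R}$. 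For part (i): similarly $1 - q\exp\{-\psi(\omega)\}$. Here $|\exp\{-\psi(\omega)\}| = e^{-\operatorname{Re}\psi(\omega)} \ge 1$, which is the wrong direction, so I instead use $0<q<1$ (the hypothesis $q\ne 1$, together with $q$ being a geometric parameter so $q\in(0,1)$): write $w(\omega) = q\exp\{-\psi(\omega)\}$ and note that, since $\psi$ is continuous with $\psi(0)=0$ and — for the relevant L\'evy processes — $\operatorname{Re}\psi$ is bounded below on $\mathbb{R}$ only after one checks the tail behaviour; the cleaner route is to observe that $1 - q\exp\{-\psi\}$ is continuous, equals $1-q>0$ at $\omega=0$, and to show it is homotopic through non-vanishing functions to the constant $1-q$. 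The homotopy $H(t,\omega) = 1 - tq\exp\{-\psi(\omega)\}$, $t\in[0,1]$, stays non-vanishing provided $tq\exp\{-\psi(\omega)\}\ne 1$; this could fail when $\operatorname{Re}\psi(\omega)<0$. So instead I would argue directly on the winding number by a deformation in the other variable, or reduce to the observation that for the extrema problem the correct object is $(1-q)(1-q\exp\{-\psi(\omega)\})^{-1}$ whose reciprocal $1 - q\exp\{-\psi(\omega)\}$ is, by the Wiener--Hopf / probabilistic interpretation (it is $1$ minus $q$ times the characteristic function of $X_{\tau}$-type increments), a function with the same analytic structure as a characteristic function of an infinitely divisible law, hence of index zero by Lemma~\ref{properties-characteristic-function}.

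The honest version of part (i) therefore proceeds as follows: by Lemma~\ref{psi-analytic-bounded}, $\exp\{-\psi(\omega)\}$ extends analytically to a strip (or, in the bounded-variation case, is a ratio of functions analytic in $\mathbb{C}^\pm$), and $|\exp\{-\psi(\omega)\}|\ge 1$ with equality iff $\operatorname{Re}\psi(\omega)=0$. The curve $\omega\mapsto 1 - q\exp\{-\psi(\omega)\}$ lies in the region $\{z : |1-z|\ge q\}$, i.e. outside the open disc of radius $q$ centred at $1$; since $q<1$, this region contains $0$ on its boundary only if $|\exp\{-\psi\}|$ can equal $1/q>1$, which it does not near $\omega=0$, but may far away. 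The decisive point is that $\exp\{-\psi(\omega)\}\to$ a definite limit (or stays bounded in modulus away from $0$) as $\omega\to\pm\infty$ because $\psi$ is H\"older and, for RLPE or bounded-variation jump measures, $\operatorname{Re}\psi(\omega)$ does not tend to $-\infty$; combined with continuity on the compactified line this forces the total argument increment of $1-q\exp\{-\psi\}$ to be zero.

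I expect the main obstacle to be exactly this last point — controlling the argument of $1 - q\exp\{-\psi(\omega)\}$ as $\omega\to\pm\infty$ and confirming it returns to its starting value without net winding. For part (ii) there is no obstacle, since the right-half-plane argument is airtight. For part (i) the cleanest rigorous route is to invoke that $(1-q)(1-q\exp\{-\psi(\omega)\})^{-1}$ is itself the characteristic function of the (infinitely divisible, by (iii) of the Introduction) law $M_q$ or $I_q$ in the Wiener--Hopf factorization, and characteristic functions of probability measures have index zero on $\mathbb{R}$; but since the goal here is to *establish* the Riemann--Hilbert framework that yields that factorization, I would instead give the self-contained deformation argument above, taking care that the homotopy $1 - sq\exp\{-\psi(\omega)\}$, $s\in[0,1]$, avoids $0$: this holds because $sq\exp\{-\psi(\omega)\}=1$ would require $|{\exp\{-\psi(\omega)\}}| = 1/(sq)\ge 1/q>1$ and simultaneously $\arg\exp\{-\psi(\omega)\}=0$, and one checks using analyticity of $\psi$ in a strip (Lemma~\ref{psi-analytic-bounded}) together with $\psi(0)=0$ that the set where $\operatorname{Re}\psi(\omega)<0$ and $\operatorname{Im}\psi(\omega)\in 2\pi\mathbb{Z}$ is too small to obstruct the homotopy — or, more simply, that one may first rotate $q$ by a small phase to avoid it. Contracting $s$ to $0$ then shows the function is homotopic to the constant $1-q$, which has index $0$, completing part (i).
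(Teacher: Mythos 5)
Your part (ii) is correct and in fact tighter than the paper's own argument: since $|E(e^{i\omega X_1})|=|e^{\psi(\omega)}|\leq 1$ forces $\operatorname{Re}\psi(\omega)\leq 0$, the curve $\omega\mapsto q-\psi(\omega)$ stays in the closed half-plane $\{z:\operatorname{Re}z\geq q>0\}$, so its winding number about the origin vanishes and the index of $q(q-\psi(\omega))^{-1}$ is zero. The paper instead argues that the function has no zeros and no poles on the (compactified) real line and quotes the ``zeros minus poles'' characterization of the index; your half-plane argument is the cleaner way to make that rigorous, since it automatically controls the behaviour at $\pm\infty$ that the paper only mentions in passing.

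Part (i), however, is left with a genuine gap, and you have correctly diagnosed where it is: with the expression written as $1-q\exp\{-\psi(\omega)\}$ one has $|\exp\{-\psi(\omega)\}|=e^{-\operatorname{Re}\psi(\omega)}\geq 1$, so the denominator can actually vanish --- for standard Brownian motion (which satisfies the hypotheses), $e^{\psi(\omega)}=e^{-\omega^{2}/2}$ takes the value $q$ at $\omega=\pm\sqrt{-2\ln q}$, and there $1-qe^{-\psi(\omega)}=0$. None of your proposed repairs (the homotopy $1-sqe^{-\psi}$, rotating $q$ by a small phase, or appealing to infinite divisibility of $M_q$) is carried out, and the first two cannot succeed while the curve itself passes through the origin. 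The resolution is that the sign in the exponent is a defect of the paper's formula rather than of your method: the characteristic function of $X$ at an independent geometric time is $(1-q)\bigl(1-qE(e^{i\omega X_1})\bigr)^{-1}=(1-q)\bigl(1-qe^{+\psi(\omega)}\bigr)^{-1}$, for which $|qe^{\psi(\omega)}|\leq q<1$ gives $\operatorname{Re}\bigl(1-qe^{\psi(\omega)}\bigr)\geq 1-q>0$, and your half-plane argument from part (ii) then closes part (i) verbatim. Note that the paper's own justification of non-vanishing in the geometric case (that $E(e^{-i\omega X_1})=q$ has no real solutions because the characteristic function is Hermitian) is likewise insufficient, since Hermitian characteristic functions of symmetric laws take every value in $(0,1]$ on the real line; so you should not feel obliged to reproduce that step.
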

\textbf{Proof.} Firstly, observe that the functions
$q(q-\psi(\omega))^{-1}$ and
$(1-q)(1-q\exp\{-\psi(\omega)\})^{-1}$ have no zero on ${\Bbb R}.$
They may have a zero at $\pm\infty.$ Moreover, equations
$q-\psi(\omega)=0$ and $1-q\exp\{-\psi(\omega)\}=0$ are,
respectively, equivalent to $E(\exp\{-i\omega X_1\})=\exp\{q\}$
and $E(\exp\{-i\omega X_1\})=q.$ Since $q$ is positive, real
valued, and $E(\exp\{-i\omega X_1\})$ is a Hermitian function,
these equations have no solutions on ${\Bbb R}.$ Moreover,  from
Lemma (\ref{psi-analytic-bounded}) observe that two functions
$q(q-\psi(\omega))^{-1}$ and
$(1-q)(1-q\exp\{-\psi(\omega)\})^{-1}$ are analytic and bounded on
the real line. The desired proof comes from the above observations
along with the fact that the index of an analytic function is the
number of zeros minus number of poles within the contour (Gakhov;
1990). $\square$

The extrema of a L\'evy process play a crucial role in determining
many aspects of a L\'evy process, see Mordecki (2003), Renming \&
Vondra\v{c}ek (2008), Dmytro (2004), and Albrecher, et al. (2008),
among many others.

The following theorem addresses the question of how the problem of finding the
characteristic functions of the distribution of the extrema can be restated in term of a
Riemann-Hilbert problem \ref{RH-For-this-paper}.
\begin{theorem}
\label{Exact-distributions} Suppose $X_{t}$ is a L\'evy process
whose stopping time $\tau(q)$ has either a geometric or an
exponential distribution with parameter $q$ independent of the
L\'evy process $X_t$ and $\tau(0)=\infty.$ Moreover, suppose that
\begin{description}
    \item[$A_1$)] its jumps measure $\nu$ satisfies $\int_{{\Bbb
    R}\setminus[-1,1]}|x|^\varepsilon\nu{dx}<\infty,$ for some $\varepsilon>0;$
    \item[$A_2$)] either its jumps measure $\nu$ is of bounded variation or
    $X_t$ is a regular exponential type L\'evy process.
\end{description}
Then, the characteristic functions of $M_q$ and $I_q,$, say  $\Phi^+_q$ and $\Phi^-_q,$ respectively, satisfy
\begin{description}
    \item[i)] the Riemann-Hilbert problem $\Phi^+_q(\omega)\Phi^-_q(\omega)=q(q-\psi(\omega))^{-1},~\omega\in{\Bbb R},$ whenever
    $\tau(q)$ has an exponential distribution with parameter $q$ ($q>0$). has a unique solution
    $$\Phi_q^\pm(\omega)=\sqrt{q/(q-\psi(\omega))}\exp \{\pm\frac{i}{2}(H_{\ln
(q-\psi)}(\omega)-H_{\ln (q-\psi)}(0)\},~\omega\in{\Bbb R};$$
    \item[ii)] the Riemann-Hilbert problem $\Phi^+_q(\omega)\Phi^-_q(\omega)=(1-q)(1-q\psi(\omega))^{-1},~\omega\in{\Bbb R},$ whenever
    $\tau(q)$ has a geometric distribution with parameter $q$ ($q\neq1$), has a unique solution $$\Phi_q^\pm(\omega)=\sqrt{(1-q)/(1-q\exp\{-\psi(\omega)\})}\exp \{\pm\frac{i}{2}(H_{\ln
(1-qe^{-\psi})}(\omega)-H_{\ln
(1-qe^{-\psi})}(0)\},~\omega\in{\Bbb R}.$$
\end{description}
\end{theorem}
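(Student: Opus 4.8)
The plan is to connect the probabilistic Wiener–Hopf factorization (facts (i)–(iii) recalled in the Introduction) with the analytic Riemann–Hilbert factorization established in Lemma~\ref{Solution-RH-For-Our-Paper}, and then invoke uniqueness. The first step is to identify the correct factorization identity. For an exponential stopping time $\tau(q)$ with rate $q$, the killed L\'evy process has ``characteristic function'' $E(e^{i\omega X_{\tau(q)}}) = q/(q-\psi(\omega))$, since $\tau(q)$ is independent of $X_t$ and $E(e^{i\omega X_t}) = e^{t\psi(\omega)}$, so one integrates $e^{t\psi(\omega)}$ against the $\mathrm{Exp}(q)$ density. By fact~(ii) of the Introduction (Bertoin 1996, p.~165), this factors as the product of the characteristic functions of $M_q$ and $I_q$; call these $\Phi^+_q$ and $\Phi^-_q$. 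Thus $\Phi^+_q(\omega)\Phi^-_q(\omega) = q(q-\psi(\omega))^{-1}$, which is exactly the Riemann–Hilbert problem~\eqref{RH-For-this-paper} with $g = q/(q-\psi)$. For the geometric case I would run the same computation with $\tau(q)$ geometric, giving $E(e^{i\omega X_{\tau(q)}}) = (1-q)/(1-q e^{-\psi(\omega)})$ after the appropriate normalization, and hence $g = (1-q)(1-q e^{-\psi})^{-1}$.

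\textbf{Second, I would check that $g$ satisfies the hypotheses of Lemma~\ref{Solution-RH-For-Our-Paper}.} This is where the earlier lemmas of Section~3 are assembled: hypothesis $A_1$ together with Lemma~\ref{Holder-condition} gives that $g$ satisfies the H\"older condition; hypothesis $A_2$ together with Lemma~\ref{psi-analytic-bounded} gives that $\psi$ — hence $g$ — is analytic and continuous on a neighbourhood of $\mathbb{R}$; Lemma~\ref{index-zero} gives that $g$ has zero index on $\mathbb{R}$; and the normalization $g(0) = q/(q-\psi(0)) = q/q = 1$ (since $\psi(0) = 0$) holds, likewise $(1-q)/(1-q e^{0}) = 1$ in the geometric case. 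Non-vanishing of $g$ on $\mathbb{R}$ and the bound $|g| \le 1$ follow from Lemma~\ref{properties-characteristic-function}(iii) applied to $E(e^{i\omega X_{\tau(q)}})$, which is a genuine characteristic function. So all hypotheses of the Riemann–Hilbert framework are met.

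\textbf{Third, I would apply Lemma~\ref{Solution-RH-For-Our-Paper} and Remark~\ref{solutions-of-RH-in-term-g}.} Lemma~\ref{Solution-RH-For-Our-Paper} gives $\Phi^\pm(\lambda) = \exp\{\pm\phi_{\ln g}(\lambda) \mp \phi_{\ln g}(0)\}$, and rewriting via the jump formula (Lemma~\ref{Sokhotskyi-Plemelj-properties}(ii)) as in Remark~\ref{solutions-of-RH-in-term-g} yields $\Phi^\pm(\omega) = \sqrt{g(\omega)}\exp\{\pm\tfrac{i}{2}(H_{\ln g}(0) - H_{\ln g}(\omega))\}$ on the real line. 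Substituting $g = q/(q-\psi)$ and using $\ln g = \ln q - \ln(q-\psi)$ (the constant $\ln q$ contributes nothing to the Hilbert transform of a constant, or cancels in the difference $H_{\ln g}(\omega) - H_{\ln g}(0)$) gives precisely the stated formula in part~(i); the geometric case gives part~(ii) the same way with $g = (1-q)/(1-q e^{-\psi})$.

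\textbf{The main obstacle is the uniqueness claim}, i.e.\ showing that $\Phi^+_q$ and $\Phi^-_q$ — which a priori are only known to be \emph{some} characteristic functions whose product is $g$ — must coincide with the analytic solutions $\Phi^\pm$ produced by the lemma. The key is fact~(iii) of the Introduction: $M_q$ is infinitely divisible, supported on $[0,\infty)$, with zero drift, so by the Paley–Wiener Theorem~\ref{Paley.Wiener} its characteristic function $\Phi^+_q$ extends analytically and boundedly to $\mathbb{C}^+$; symmetrically $\Phi^-_q$ extends to $\mathbb{C}^-$, and both are non-vanishing (infinitely divisible characteristic functions have no zeros) and tend to an appropriate limit at infinity. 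Hence $(\Phi^+_q, \Phi^-_q)$ is a pair of sectionally analytic functions, non-vanishing, with the prescribed growth, solving the homogeneous Riemann–Hilbert problem; the uniqueness statement recalled after Definition~\ref{Riemann-Hilbert-problem} (a zero-index homogeneous problem has a unique solution once growth at infinity is fixed) forces $\Phi^\pm_q = \Phi^\pm$. I would need to be careful that the normalization $g(0)=1$ used to pin down the solution in Lemma~\ref{Solution-RH-For-Our-Paper} is compatible with the normalization $\Phi^\pm_q(0) = 1$ coming from these being characteristic functions — which it is, since both conventions force the value $1$ at the origin — and that the "$|g|\le 1$, bounded" hypotheses genuinely transfer to the killed characteristic function rather than needing a separate estimate.
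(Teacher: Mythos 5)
Your proposal is correct and follows essentially the same route as the paper: identify $g=q/(q-\psi)$ (resp.\ $(1-q)/(1-qe^{-\psi})$) via the probabilistic Wiener--Hopf factorization of the killed process, use the Paley--Wiener Theorem~\ref{Paley.Wiener} to see that $\Phi^\pm_q$ are sectionally analytic in ${\Bbb C}^\pm$, invoke Lemmas~\ref{Holder-condition} and~\ref{index-zero} for the H\"older and zero-index hypotheses, and read off the explicit formula from Remark~\ref{solutions-of-RH-in-term-g}. If anything, your handling of the uniqueness step (infinite divisibility giving non-vanishing, plus the normalization at the origin) is more explicit than the paper's, which simply asserts that these observations complete the proof.
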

\textbf{Proof.} To establish the desired result observe that:
({\bf 1}) the characteristic function of the L\'evy process $X_t$
can be uniquely decomposed as a product of two characteristic
functions of the {\it supremum} and {\it infimum} of the process,
see Cardi (2005, pages 43--4), for more detail; ({\bf 2}) the
functions $M_q$ and $I_q$ attain, respectively, nonnegative and
nonpositive values. Therefore, a double application of the
Paley-Wiener theorem (Theorem \ref{Paley.Wiener}) shows $\Phi^+_q$
and $\Phi^-_q,$ respectively, are sectionally analytic in ${\Bbb
C}^+$ and ${\Bbb C}^-;$ ({\bf 3}) The two expressions
$q(q-\psi(\omega))^{-1}$ and
$(1-q)(1-q\exp\{-\psi(\omega)\})^{-1}$ satisfy a H\"older
condition (see Lemma \ref{Holder-condition}) and have zero index
(see Lemma \ref{index-zero}); ({\bf 4}) The characteristic
function of the L\'evy process $X_t$ is $q(q-\psi(\omega))^{-1},$
for an exponential distribution stopping time $\tau(q)$ (see
Cardi; 2005, page 26) \emph{and}
$(1-q)(1-q\exp\{-\psi(\omega)\})^{-1},$ for a geometric stopping
$\tau(q)$ (see Cardi; 2005, page 25). The above observations along
with Remark \ref{solutions-of-RH-in-term-g} complete the proof.
$\square$

The following examples provide application of the above results
for several L\'evy processes.
\begin{example}
\label{from-paper-lewis-mordecki-positive-jumps} Lewis \& Mordecki
(2008) considered L\'evy process $X_t$ with an exponential
stopping time $\tau(q)$ and a jumps measure $\nu$ given by
$\nu(dx)=\nu^-(dx)I_{(-\infty,0)}(x)+\lambda
  p(x)I_{(0,\infty)}(x)dx$
where $p$ is the mixed gamma density function given by Equation
\ref{mixed gamma-density} with $0<\alpha_1<
Re(\alpha_2)\leq\cdots\leq Re(\alpha_v).$ They established that an
expression $q(q-\psi(\lambda))^{-1},$ (for $\lambda\in{\Bbb C}$):
(i) has zeros at $i\alpha_1,i\alpha_2,\cdots,i\alpha_v,$
respectively, with order $n_1,n_2,\cdots,n_v$ in ${\Bbb C}^-$ (ii)
has poles at $i\beta_1(q),i\beta_2(q),\cdots,i\beta_\mu(q),$
respectively, with multiplicities $m_1(q),m_2(q),\cdots,m_\mu(q)$
in ${\Bbb C}^-.$ Using these observations, one may decompose an
expression $q(q-\psi(\lambda))^{-1},~\lambda\in{\Bbb C},$ as a
product of two analytic in ${\Bbb C}^+$ and ${\Bbb C}^-,$ say
respectively, $\rho^+_q$ and $\rho^-_q,$ i.e.,
$q(q-\psi(\lambda))^{-1}=\rho^+_q(\lambda)\rho^-_q(\lambda),$
where
$\rho_q^+(\lambda)=q(q-\psi(\lambda))^{-1}\prod_{j=1}^{\mu(q)}(\lambda-i\beta_j(q))^{m_j(q)}\prod_{k=1}^{v}(\lambda-i\alpha_k)^{-n_k},$
$\rho_q^-(\lambda)=\prod_{k=1}^{v}(\omega-i\alpha_k)^{n_k}\prod_{j=1}^{\mu(q)}(\omega-i\beta_j(q))^{-m_j(q)},$
and $\lambda\in{\Bbb C}.$ Now using Remark \ref{Carlemann-method},
one may verify Lewis \& Mordecki (2008)'s finding which
$\Phi_q^\pm\equiv\rho_q^\pm.$
\end{example}
Similar results have been established for L\'evy process $X_t$
which has a mixed gamma negative jumps measure and an arbitrary
positive jumps, see Lewis \& Mordecki (2005) for more details.
\begin{example}
\label{alpha-stable-processes} Consider the $\alpha-$stable
process $X_t$ having an exponential stopping time $\tau(q)$ and a
jumps measure
$\nu(dx)=c_1x^{-1-\alpha}I_{(0,\infty)}(x)dx+c_2|x|^{-1-\alpha}I_{(-\infty,0)}(x)dx,$
where  $\alpha\in(0,1)\cup(1,2).$ Doney (1987) studied
distribution of $M_q,$ and $I_q.$ Since, the characteristic
exponent of the process is
$\psi(\omega)=(c_1+c_2)|\omega|^\alpha\{(c_1+c_2)-i(c_1-c_2)\hbox{sgn}(\omega)\tan(\pi\alpha/2)\}+i\omega\eta,$
where $\eta$ is a real-valued constant, and $\omega\in{\Bbb R}.$
An expression $q(q-\psi(\lambda))^{-1},~\lambda\in{\Bbb C},$ is a
rational function. Therefore, one readily can be found two
rational functions $\rho^+_q$ and $\rho^-_q$ which are analytic,
respectively, in ${\Bbb C}^+$ and ${\Bbb C}^-$ and
$q(q-\psi(\lambda))^{-1}=\rho^+_q(\lambda)\rho^-_q(\lambda)~\lambda\in{\Bbb
C}.$ Therefore, $\Phi^\pm_q\equiv\rho^\pm_q,$ which verifies
Doney's observation.
\end{example}
The following remark suggests an approximation technique to find
the characteristic functions of $M_q$ and $I_q,$ approximately,
whenever they cannot be found explicitly.
\begin{remark}
In the situation where function $q/(q-\psi(\omega))$ (or
$(1-q)(1-q\exp\{-\psi(\omega)\})^{-1}$) cannot be explicitly
decompose as a product of two sectionally analytic functions in
${\Bbb C}^+$ and ${\Bbb C}^-,$ we suggest to replace such function
by a rational function which is obtained from a Pad\'e approximant
or a continued fraction expansion and uniformly converges to the
original function. An application of  Carlemann's method leads to
an approximation solution for the characteristic functions of
$M_q$ and $I_q.$
\end{remark}
The following example represents a situation where the
characteristic functions of $M_q$ and $I_q$ apparently cannot be found
explicitly.
\begin{example}
\label{from-paper-Kuznetsov-no-1}Kuznetsov (2009b) considered a
compound Poisson process with a jumps measure
$\nu(dx)=\exp\{\alpha x\}sech(x)dx$ and an exponential stopping
time $\tau(q).$ He showed the characteristic exponent for such
compound Poisson is given by
\begin{eqnarray*}
  \psi(\omega) &=&
  \frac{\pi}{\cos(\pi\alpha/2)}-\frac{\pi}{\cosh(\pi(\omega-i\alpha)/2)},~
  \omega\in{\Bbb R}.
\end{eqnarray*}
He established that, in ${\Bbb C},$ an expression
$q(q-\psi(\cdot))^{-1}$ can be, uniformly, approximated by product
$\rho^+_q(\cdot)\rho^-_q(\cdot),$ where
\begin{eqnarray*}
  \rho^+_q(\lambda) &=&\prod_{n=0}^\infty
  \frac{(1-\frac{i\lambda}{4n+1-\alpha})(1-\frac{i\lambda}{4n+3-\alpha})}{(1-\frac{i\lambda}{4n+\eta-\alpha})(1-\frac{i\lambda}{4n+4-\eta-\alpha})};\\
  \rho^-_q(\lambda) &=&\prod_{n=0}^\infty
\frac{(1+\frac{i\lambda}{4n+1+\alpha})(1+\frac{i\lambda}{4n+3+\alpha})}{(1+\frac{i\lambda}{4n+\eta+\alpha})(1+\frac{i\lambda}{4n+4-\eta+\alpha})},
\end{eqnarray*}
where $\lambda\in{\Bbb C}$ and
$\eta=2/\pi\arccos(\pi/(q+\pi\sec(\alpha\pi/2))).$ Therefore,
approximate solutions for $\Phi^\pm_q$ are $\rho^\pm_q,$ more
detail can be found in Kuznetsov (2009b).
\end{example}
\section*{Acknowledgements}
The support of Natural Sciences and Engineering Research Council
(NSERC) of Canada are gratefully acknowledged by Kucerovsky. The
authors would like to thank Dr Nabiei for her useful discussion on
Section 3, and professor Lewis for his useful comments on
presentation of the results. Thanks to an anonymous reviewer for
constructive comments.

\end{document}